\definecolor{darkred}{RGB}{100,0,0}
\definecolor{darkgreen}{RGB}{0,100,0}
\definecolor{darkblue}{RGB}{0,0,150}
\newtheorem{prp}{Proposition}
\newtheorem{cor}{Corollary}
\theoremstyle{remark}
\newtheorem{rem}{Remark}
\newtheorem{asp}{Assumption}
\def\beq{\begin{equation}} 
\def\eeq{\end{equation}}
\def\beqn{\begin{eqnarray*}}
\def\eeqn{\end{eqnarray*}}
\def\Bitem{\begin{itemize}\setlength{\itemsep}{.2in}}
\def\bitem{\begin{itemize}\setlength{\itemsep}{.05in}}
\def\eitem{\end{itemize}}
\def\Benum{\begin{enumerate}\setlength{\itemsep}{.2in}}
\def\benum{\begin{enumerate}\setlength{\itemsep}{.05in}}
\def\eenum{\end{enumerate}}
\def\bmult{\begin{multline*}}
\def\emult{\end{multline*}}
\def\bcenter{\begin{center}}
\def\ecenter{\end{center}}
\def\bframe{\begin{frame}}
\def\eframe{\end{frame}}
\newcommand{\prpref}[1]{Proposition~\ref{prp:#1}}
\newcommand{\secref}[1]{Section~\ref{sec:#1}}
\newcommand{\figref}[1]{Figure~\ref{fig:#1}}
\newcommand{\tabref}[1]{Table~\ref{tab:#1}}
\newcommand{\algref}[1]{Algorithm~\ref{alg:#1}}
\newcommand{\remref}[1]{Remark~\ref{rem:#1}}
\newcommand{\aspref}[1]{Assumption~\ref{asp:#1}}
\def\cA{\mathcal{A}}
\def\cN{\mathcal{N}}
\def\bW{\mathbf{W}}
\def\bbP{\mathbb{P}}
\def\bbR{\mathbb{R}}
\newcommand{\E}{\operatorname{\mathbb{E}}}
\newcommand{\Var}{\operatorname{Var}}
\def\1{\mathbbm{1}}
\definecolor{purple}{rgb}{0.4,.1,.9}
\newcommand\blfootnote[1]{%
  \begingroup
  \renewcommand\thefootnote{}\footnote{#1}%
  \addtocounter{footnote}{-1}%
  \endgroup
}
\begin{document}
\thispagestyle{empty}

\title{RANSAC Algorithms for \\ Subspace Recovery and Subspace Clustering}
\author{Ery Arias-Castro \and Jue Wang}
\date{\normalsize University of California, San Diego}
\maketitle

\blfootnote{The present project was initiated in the context of an {\em Independent Study for Undergraduates} (Math 199).  We acknowledge support from the US National Science Foundation (DMS 1513465).}

\begin{abstract}
We consider the RANSAC algorithm in the context of subspace recovery and subspace clustering.  We derive some theory and perform some numerical experiments.  We also draw some correspondences with the methods of \cite{hardt2013algorithms} and \cite{chen2009spectral}.
\end{abstract}


\section{Introduction} \label{sec:intro}

The Random Sample Consensus (with acronym RANSAC) algorithm of \cite{fischler1981random}, and its many variants and adaptations, are well-known in computer vision for their robustness in the presence of gross errors (outliers).  In this paper we focus on the closely related problems of subspace recovery and subspace clustering in the presence of outliers, where RANSAC-type methods are believed to be optimal, yet too costly in terms of computations when the fraction of inliers is small.  Although this is a well-understood limitation of the RANSAC, we nevertheless establish this rigorously in the present context.  In particular, we derive the performance and computational complexity of RANSAC for these two problems, and perform some numerical experiments corroborating our theory and comparing the RANSAC with other methods proposed in the literature.

\subsection{The problem of subspace recovery} \label{sec:recovery-intro}
Consider a setting where the data consist of $n$ points in dimension $p$, denoted $x_1, \dots, x_n \in \bbR^p$.  It is assumed that $m$ of these points lie on a $d$-dimensional linear subspace $L$ and that the points are otherwise in general position, which means that the following assumption is in place:
\begin{asp}\label{asp:general}
A $q$-tuple of data points (with $q \le p$) is linearly independent unless it includes at least $d+1$ points from $L$.
\end{asp}
(We say that points are linearly dependent if they are so when seen as vectors.)

The points on $L$ are called inliers and all the other points are called outliers.  This is the setting of subspace recovery without noise.  When there is noise, the points are not exactly on the underlying subspace but rather in its vicinity.  In any case, the goal is to recover $L$, or said differently, distinguish the inliers from the outliers.
See \figref{recovery} for an illustration in a setting where the subspace is of dimension $d=2$ in ambient dimension $p=3$.
The goal is to recover the subspace $L$ and/or identify the inlier points.


This problem is intimately related to the problem of robust covariance estimation, which dates back decades \citep{maronna1976robust, huber2009robust, tyler1987distribution}, but has attracted some recent attention.  We refer the reader to the introduction of \citep{zhang2014novel} for a comprehensive review of the literature, old and new.
Subspace recovery in the presence of outliers, as we consider the problem here, is sometimes referred to a robust principal components analysis, although there are other meanings in the literature more closely related to matrix factorization with a low-rank component \citep{wright2009robust, candes2011robust}.

\begin{figure}[h!]
\centering
\begin{subfigure}[t]{.4\textwidth}
\centering
\includegraphics[scale=.3]{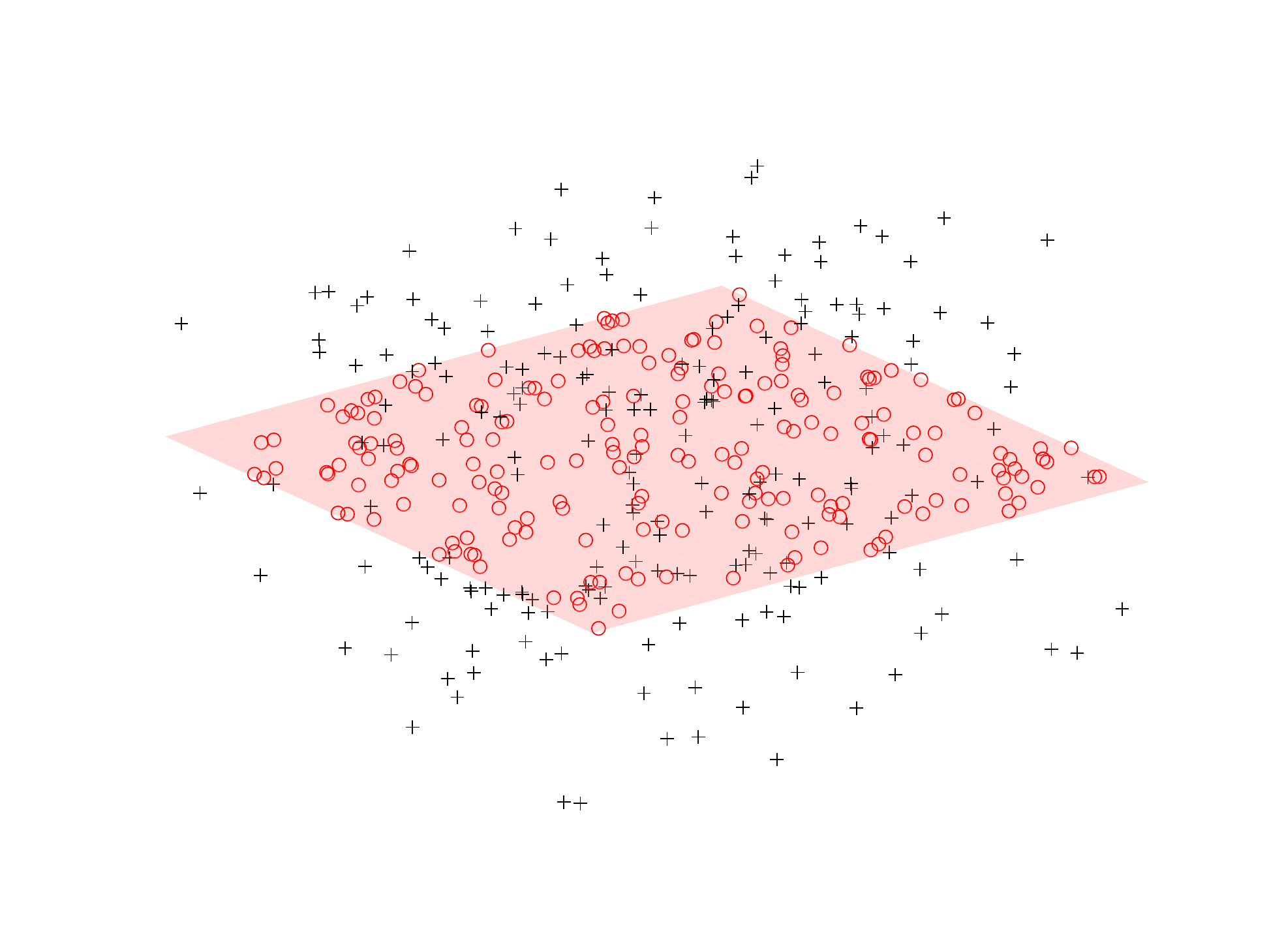}
\caption{Subspace recovery problem.}  
\label{fig:recovery}
\end{subfigure}%
\qquad
\begin{subfigure}[t]{.4\textwidth}
\centering
\includegraphics[scale=0.3]{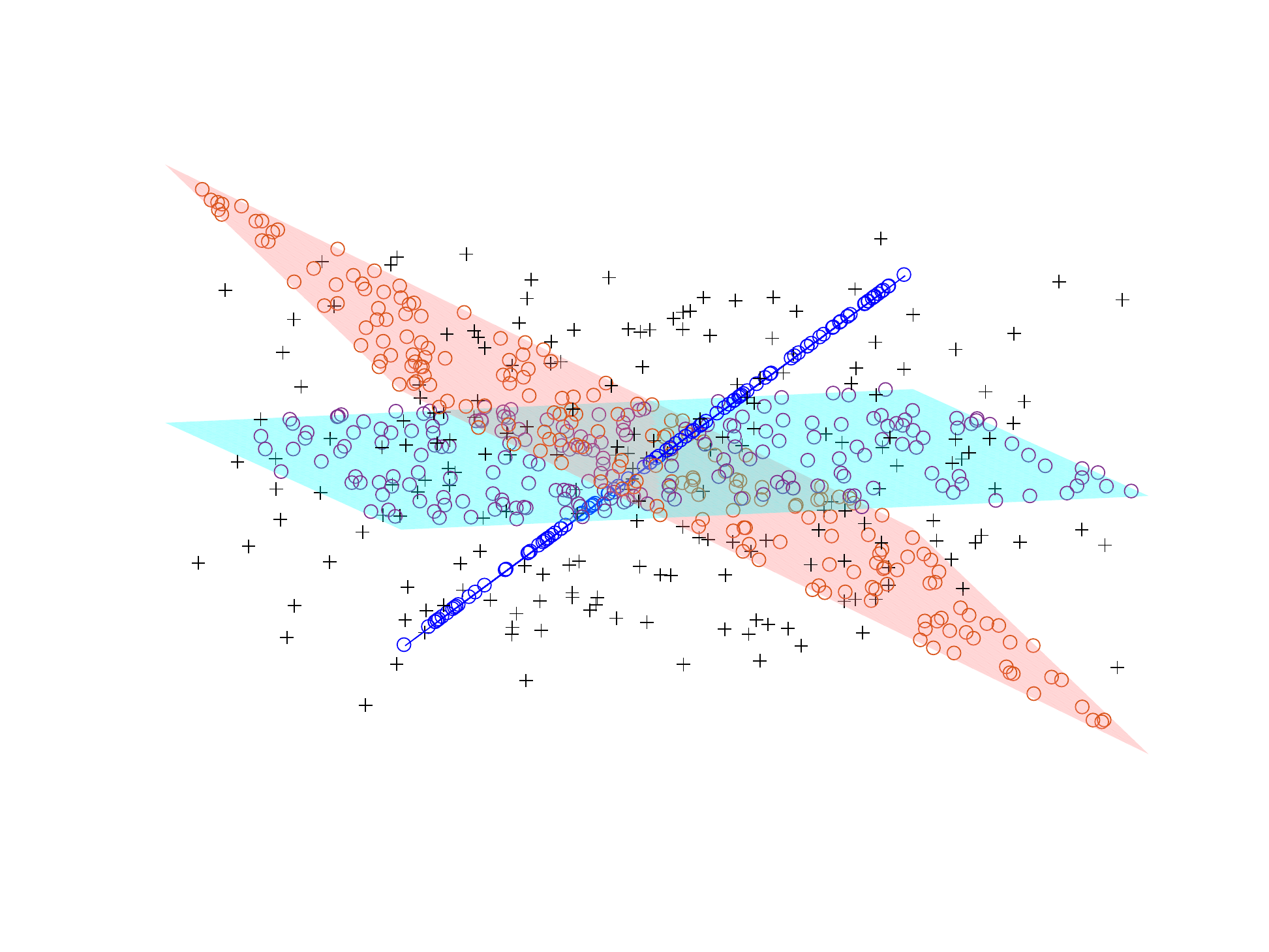}
\caption{Subspace clustering problem.}  
\label{fig:clustering}
\end{subfigure}%
\caption{An illustration of the two settings considered in the paper.}
\end{figure}

\subsection{The problem of subspace clustering} \label{sec:clustering-intro}
Consider a setting where the data consist of $n$ points in dimension $p$, denoted $x_1, \dots, x_n \in \bbR^p$. It is assumed that $m_k$ of these points lie on a $d_k$-dimensional linear subspace $L_k$, where $k = 1, \dots, K$ (so that there are $K$ subspaces in total).  The remaining points are in general position:

\begin{asp}\label{asp:general-clustering}
A $q$-tuple of data points is linearly independent unless it includes at least $d_k+1$ points from $L_k$ for some $k \in \{1, \dots, K\}$.
\end{asp}

In this setting all the points on one of the subspaces are inliers, and all the other points are outliers. 
This is the setting of subspace clustering without noise. 
When there is noise, the inliers are not exactly on the subspaces but in their vicinity.  
See \figref{clustering} for an illustration in a setting where there is one subspace of dimension $d_1 =1$ and two subspaces of dimension $d_2 = d_3 = 2$, in ambient dimension $p=3$.
The goal is to cluster $m_k$ points to their corresponding $L_k$ for all $k \in \{1, \dots, K\}$.


The problem of subspace clustering has applications in computer vision, in particular, movement segmentation \citep{vidal2011subspace, vidal2005generalized}.

\subsection{Contents}
In \secref{recovery}, we consider the problem of subspace recovery.  In \secref{clustering}, we consider the problem of subspace clustering.  In both cases, we study a `canonical' RANSAC algorithm, deriving some theory and comparing it with other methods in numerical experiments.  We briefly discuss our results in \secref{discussion}.

\begin{rem}[linear vs affine]
Throughout, we consider the case where the subspaces are linear, although some applications may call for affine subspaces.  (This is for convenience.)  Because of this, we are able to identify a point $x \in \bbR^p$ with the corresponding vector (sometimes written $x - 0$).
\end{rem}

\section{Subspace recovery} \label{sec:recovery}

We consider the setting of \secref{recovery-intro} and use the notation defined there.  In particular, we work under \aspref{general}.  We consider the noiseless setting for simplicity.  

\subsection{RANSAC for subspace recovery}
We propose a simple RANSAC algorithm for robust subspace recovery.  
In the present setting, in particular under \aspref{general}, the underlying linear subspace $L$ (which we assumed is of dimension $d$) is determined by any $(d+1)$-tuple that comes from that subspace.  
The algorithm starts by randomly selecting a $(d+1)$-tuple and checking if this tuple forms a linear subspace of dimension $d$. 
If so, the subspace is recovered and the algorithm stops. 
Otherwise, the algorithm continues, repeatedly sampling a $(d+1)$-tuple at random until the subspace is discovered.  (Optionally, the algorithm can be made to stop when a maximum number of tuples has been sampled.)  In this formulation, detailed in \algref{ransac-recovery}, $d$ is known. 

\begin{algorithm}[h]
\caption{RANSAC (Subspace Recovery)}
\label{alg:ransac-recovery}
\SetKwInOut{Input}{Input}\SetKwInOut{Output}{Output}
\Input{data points $x_1, \dots, x_n \in \bbR^p$; dimension $d$}
\Output{a linear subspace of dimension $d$ containing at least $d+1$ points}
\Repeat{the tuple is linearly dependent}{randomly select a $(d+1)$-tuple of data points}
\Return{the subspace spanned by the tuple}
\end{algorithm}

By design, the procedure is exact.  (Again, we are in the noiseless setting.  In a noisy setting, the method can be shown to be essentially optimal.)  
However, researchers have shied away from a RANSAC approach because of its time complexity.  We formalize what is in the folklore in the following.

\begin{prp} \label{prp:ransac-recovery}
\algref{ransac-recovery} is exact and the number of iterations has the geometric distribution\footnote{ Here we consider the variant of the geometric distribution that is supported on the positive integers.} with success probability $\theta_1 := \binom{m}{d+1}/\binom{n}{d+1}$.  Thus the expected number of iterations is $1/\theta_1 = \binom{n}{d+1}/\binom{m}{d+1}$, which is of order $O(n/m)^{d+1}$ when $d$ is held fixed.
\end{prp}

Note that each iteration requires on the order of $O(p d^2)$ operations as it requires computing the rank of a $p$-by-$(d+1)$ matrix.  

\begin{proof}
The algorithm sample a $(d+1)$-tuple independently and uniformly at random until the tuple is linearly dependent.  Because of \aspref{general}, a $(d+1)$-tuple is linearly dependent if and only if all the points in the tuple are from $L$.  While there are $\binom{n}{d+1}$ $(d+1)$-tuples in total, only $\binom{m}{d+1}$ fit the bill, so that the probability of drawing a suitable tuple is $\theta_1 = \binom{m}{d+1}/\binom{n}{d+1}$.  Because the draws are independent, the total number of draws until the algorithm stops has the geometric distribution with success probability $\theta_1$.  

We know that the mean of this distribution is $1/\theta_1 = \binom{n}{d+1}/\binom{m}{d+1}$, and when $d$ is assumed fixed, while $n$ and $m$ are large, we have 
\[1/\theta_1 \sim \frac{n^{d+1}/(d+1)!}{m^{d+1}/(d+1)!} = (n/m)^{d+1}.\]
(The reader is invited to verify that this still holds true as long as $d = o(m^2)$.)
\end{proof}

In applications where the number of outliers is a non-negligible fraction of the sample (meaning that $n/m$ is not close to 1), the RANSAC's number of iterations  depends exponentially on the dimension of subspace.  This confirms the folklore, at least in such a setting.

\begin{rem} \label{rem:with-without}
For simplicity, we analyzed the variant of the algorithm where the tuples are drawn with replacement, so that the worst-case number of iterations  is infinite.  However, in practice one should draw the tuples without replacement (which is equally easy to do in the present setting), as recommended in \citep{schattschneider2012enhanced}.  For this variant, the worst-case time complexity is $\binom{n}{d+1}-\binom{n}{d+1} + 1$.  Moreover, \prpref{ransac-recovery} still applies if understood as an upper bound.  (The number of iterations  has a so-called negative hypergeometric distribution in this case.)
\end{rem}

\begin{rem} \label{rem:unknown-d}
If the dimension $d$ is unknown, a possible strategy is to start with $d=1$, run the algorithm for a maximum number of iterations, and if no pair of points is found to be aligned with the origin, move to $d=2$, and continue in that fashion, increasing the dimension.  If no satisfactory tuple is found, the algorithm would start again at $d=1$.  The algorithm will succeed eventually.
\end{rem}

\subsection{The algorithm of Hardt and Moitra for subspace recovery}
As we said above, researchers have avoid RANSAC procedures because of the running time, which as we saw can be prohibitive.  Recently, however, \cite{hardt2013algorithms} have proposed a RANSAC-type algorithm that strikes an interesting compromise between running time and precision.  

Their algorithm is designed for the case where the sample size is larger than the ambient dimension, namely $n > p$.  It can be described as follows.  It repeatedly draws a $p$-tuple at random until the tuple is found to be linearly dependent.  When such a tuple is found, the algorithm returns a set of linearly dependent points in the tuple.  See the description in \algref{hardt-moitra}.  A virtue of this procedure is that it does not require knowledge of the dimension $d$ of the underlying subspace. 

\begin{algorithm}[h]
\caption{Hardt-Moitra (Subspace Recovery)}
\label{alg:hardt-moitra}
\SetKwInOut{Input}{Input}\SetKwInOut{Output}{Output}
\Input{data points $x_1, \dots, x_n \in \bbR^p$}
\Output{a linear subspace}
\Repeat{the tuple is linearly dependent}{randomly select a $p$-tuple of data points}
\Return{the subspace spanned by any subset of linearly dependent points in the tuple}
\end{algorithm}

\begin{prp} \label{prp:hardt-moitra}
When $n > p$, \algref{hardt-moitra} is exact and its number of iterations  has the geometric distribution with success probability $\theta_2 := \sum_{k \ge d+1} \binom{m}{k}\binom{n-m}{p-k}/\binom{n}{p}$.  Thus the expected number of iterations  is $1/\theta_2$.
\end{prp}

Note that each iteration requires on the order of $O(p^3)$ operations as it requires computing the rank of a $p$-by-$p$ matrix.  

\begin{proof}
With \aspref{general} in place, a $p$-tuple is linearly dependent if and only if it contains at least $d+1$ points from the subspace $L$.  Thus the Repeat statement above stops exactly when it found a $p$-tuple that contains at least $d+1$ points.  Moreover, also because of \aspref{general}, the points within that tuple that are linear dependent must belong to $L$.  Therefore, the algorithm returns $L$, and is therefore exact.

We now turn to the number of iterations.  The number of iterations is obviously geometric and the success probability is the probability that a $p$-tuple drawn uniformly at random contains at least $d+1$ points from $L$.  $\theta_2$ is that probability.  Indeed, it is the probability that, when drawing $p$ balls without replacement from an urn with $m$ red balls out of $n$ total, the sample contains at least $d+1$ red balls.  In the present context, the balls are of course the points and the red balls are the points on the linear subspace.  
\end{proof}

\citet*{hardt2013algorithms} analyze their algorithm in a slightly different setting and with the goal of finding the maximum fraction of outliers that can be tolerated before the algorithm breaks down in the sense that it does not run in polynomial time.  In particular, they show that, if $m/n \ge d/p$, then their algorithm has a number of iterations  with the geometric distribution with success probability at least $1/(2p^2 n)$, so that the expected number of iterations is bounded by $2p^2 n$, which is obviously polynomial in $(p,n)$.
In fact, it can be better than that.  The following is a consequence of \prpref{hardt-moitra}.

\begin{cor}
If, in addition to $n > p$, it holds that $m/n \ge d/p$, with $d/p \le \tau$ and $p/n \le \tau$, for some fixed $\tau < 1$, then $\theta_2$ is bounded from below by a positive quantity that depends only on $\tau$.  Consequently, \algref{hardt-moitra} has expected number of iterations  of order $O(1)$.
\end{cor}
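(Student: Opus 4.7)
By \prpref{hardt-moitra}, $\theta_2 = P(H \ge d+1)$ where $H$ is hypergeometric with parameters $(n,m,p)$, mean $\mu = pm/n$, and variance $\sigma^2 = p(m/n)(1-m/n)(n-p)/(n-1)$. The hypothesis $m/n \ge d/p$ translates to $\mu \ge d$; a standard coupling (adjoin one extra red ball to the urn) shows that $P(H \ge d+1)$ is nondecreasing in $m$, so it suffices to treat the smallest admissible value $m = \lceil dn/p \rceil$, for which $\mu \in [d, d+\tau]$. Under this reduction, $m/n \le d/p + 1/n \le \tau + 1/n$ yields $1-m/n \ge 1-\tau-1/n$, while $(n-p)/(n-1) \ge 1 - p/n \ge 1-\tau$; thus, up to lower-order terms, $\sigma^2 \ge d(1-\tau)^2$.

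The core of the plan is then to split on the magnitude of $d$. When $d \ge d_0(\tau)$ for some threshold depending only on $\tau$, the standard deviation $\sigma \ge (1-\tau)\sqrt{d}$ is large enough that a Berry--Esseen bound for sampling without replacement yields $|P(H \le d) - \Phi((d-\mu)/\sigma)| \le C/\sigma$, with $C$ an absolute constant. Since $\mu \ge d$ gives $\Phi((d-\mu)/\sigma) \le \Phi(0) = 1/2$, this forces $\theta_2 \ge 1/2 - C/((1-\tau)\sqrt{d}) \ge 1/4$ once $d_0$ is chosen large enough in terms of $\tau$. When $d < d_0(\tau)$ only finitely many values of $d$ remain, and for each such fixed $d$ I would bound $\theta_2$ below by a compactness argument in the rescaled parameters $(m/n,p/n)$: along any admissible sequence $H$ converges in distribution (after passing to a subsequence) either to $\mathrm{Poi}(\lambda)$ with $\lambda \ge d$ (when $p\to\infty$ with $pm/n$ bounded) or to $\mathrm{Bin}(p,\alpha)$ with $\alpha \ge d/p > 0$ (when $p$ stays bounded), and both limits place strictly positive mass on $\{X \ge d+1\}$. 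Taking the infimum over the finite set of small values of $d$ together with the large-$d$ bound produces the claimed $c(\tau)>0$.

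The main obstacle will be the small-$d$ regime, where the admissible parameter set is not compact and one has to stitch together the Poisson and Binomial limiting behaviours into a single uniform lower bound; the large-$d$ regime is by contrast a routine Berry--Esseen application once the variance lower bound is in hand. The final claim about $O(1)$ expected iterations is then immediate from $\mathbb{E}[\text{iterations}] = 1/\theta_2 \le 1/c(\tau)$.
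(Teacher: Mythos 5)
Your proposal is sound and, if anything, more careful than the paper's own argument, but it follows a genuinely different route. The paper's proof is a single asymptotic computation: after asserting a monotonicity of $\theta_2$ in the four parameters, it passes to a purported worst-case sequence with $m/n \to \tau$ and $d/p \to \tau$ along which $\sigma \to \infty$, applies the normal approximation to the hypergeometric together with Slutsky's theorem, and concludes that the limiting value of $\theta_2$ is $1/2$. That argument is silent about precisely the regime you isolate as the hard one: when $d$ (hence $p$, hence $\sigma$) stays bounded, the CLT step does not apply, and the monotonicity directions as stated in the paper are questionable, whereas your coupling reduction to $m = \lceil dn/p \rceil$ is the correct monotonicity statement. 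Your two-regime decomposition fills exactly this gap: large $d$ via a quantitative Berry--Esseen bound for sampling without replacement, yielding an explicit threshold $d_0(\tau)$ and the uniform bound $\theta_2 \ge 1/4$, and bounded $d$ via subsequential limit laws, where the paper's proof offers nothing. Two small points to tighten: (i) your classification of subsequential limits in the small-$d$ regime omits the case $p \to \infty$ with $pm/n \to \infty$, which is harmless since there Chebyshev's inequality (the hypergeometric variance is at most its mean) gives $\theta_2 \to 1$; (ii) it is worth recording that every admissible configuration has $p \ge d+1$ and $m \ge d+1$ (since $n > p$ forces $m \ge dn/p > d$), so $\theta_2 > 0$ pointwise, which is what lets the compactness-and-contradiction argument close. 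In short, the paper's route buys brevity; yours buys an explicit constant in the main regime and an actual treatment of the non-CLT regime, at the cost of stitching together the Poisson and binomial limits.
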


\begin{proof}
Let $U$ denote a random variable with the hypergeometric distribution with parameters $(p,m,n)$ described above.  Then $\theta_2 = \bbP(U \ge d+1)$, and it depends on $(d,p,m,n)$.  We show that $\theta_2$ is bounded from below irrespective of these parameters as long as the conditions are met.  Noting that $\theta_2$ is increasing in $d$ and $n$, and decreasing in $p$ and $m$, it suffices to consider how $\theta_2$ varies along a sequence where $n \to \infty$ and $(d,p,m)$ all varying with $n$ in such a way that $m/n \to \tau$ and $d/p \to \tau$, as this makes the expected number of iterations  largest.
Define
\begin{align*}
\mu &:= \E(U) = p (m/n); \\
\sigma^2 &= \Var(U) = p (m/n) (1-m/n) (n-p)/(n-1). 
\end{align*}
The condition $m/n \ge d/p$ implies that $\mu \ge d$, and along the sequence of parameters under consideration, $\sigma \to \infty$.  
Moreover, along such a sequence, $Z := (U - \mu)/\sigma$ is standard normal in the limit, so that
\begin{align*}
\theta_2 = \bbP(U \ge d+1) 
& = \bbP\big(Z \ge (d+1 - \mu)/\sigma\big) \\
& \ge \bbP\big(Z \ge 1/\sigma\big) \\
& \to \bbP\big(\cN(0,1) \ge 0) = 1/2,
\end{align*}
using Slutsky's theorem in the last line.
\end{proof}

\subsection{Numerical experiments}
We performed some small-scale numerical experiments comparing RANSAC (in the form of \algref{ransac-recovery}), the Hardt-Moitra (HM) procedure (\algref{hardt-moitra}), and the Geometric Median Subspace (GMS) of \citep{zhang2014novel}, which appears to be one of the best methods on the market.  (We used the code available on Teng Zhang's website.)

Each inlier is uniformly distributed on the intersection of the unit sphere with the underlying subspace.  
Each outlier is simply uniformly distributed on the unit sphere.  
The result of each algorithm is averaged over 1000 repeats.  Performance is measured by the (first principal) angle between the returned subspace and the true subspace.  (This is to be fair to GMS, as the other two algorithms are exact.)
The results are reported in \tabref{recovery}.  

We performed another set of experiments to corroborate the theory established in \prpref{ransac-recovery} for the complexity of RANSAC.  The results are shown in \figref{RANSAC recovery complexity}, where each setting has been repeated 1000 times.  As expected, as the dimensionality of the problem increases, RANSAC's complexity becomes quickly impractical.

\begin{table}
\centering
\begin{tabular}{@{}crrrcrrrcrrr@{}}\toprule
 \multicolumn{1}{c}{parameters} & \phantom{abc} & \multicolumn{3}{c}{average system time} & \phantom{abc}& \multicolumn{3}{c}{difference in angle} \\
 \cmidrule{3-5} \cmidrule{7-9} 
$(d,p,m,m_0)$ &&RANSAC & HM&GMS && RANSAC &HM& GMS \\ \midrule
$(8, 10, 100, 50)$ & &   .0051 &.0009&     .0947 && 0 &0&     .0341 \\
$(4,10,100,50)$ &  &    .0011 &.0006 &              .1912  && 0 &  0&   0 \\
$(8, 20,100,50)$ &  &    .0064&.0002&         .5008  && 0 &0&           .0184\\
$(6, 10, 100, 20)$ & & .0001 & .0006&        .1918 && 0 &0&     0 \\
$(9, 10, 100, 50)$ & &     .0076 & .0093&           .0117 && 0 &0&     .0411\\
$(18, 20, 100, 50)$ & &     .6123 &  .0013&       .0160&&0& 0 &       .2285\\
\bottomrule
\end{tabular}
\caption{Numerical experiments comparing RANSAC, HM, and GMS for the problem of subspace recovery.  As in the text, $d$ is the dimension of the subspace, $p$ is the ambient dimension, $m$ is the number of inliers, $m_0$ is the number of outliers (so that $n = m + m_0$ is the sample size).}
\label{tab:recovery}
\end{table}

\begin{figure}[!ht]
\center {\includegraphics[scale=0.5]{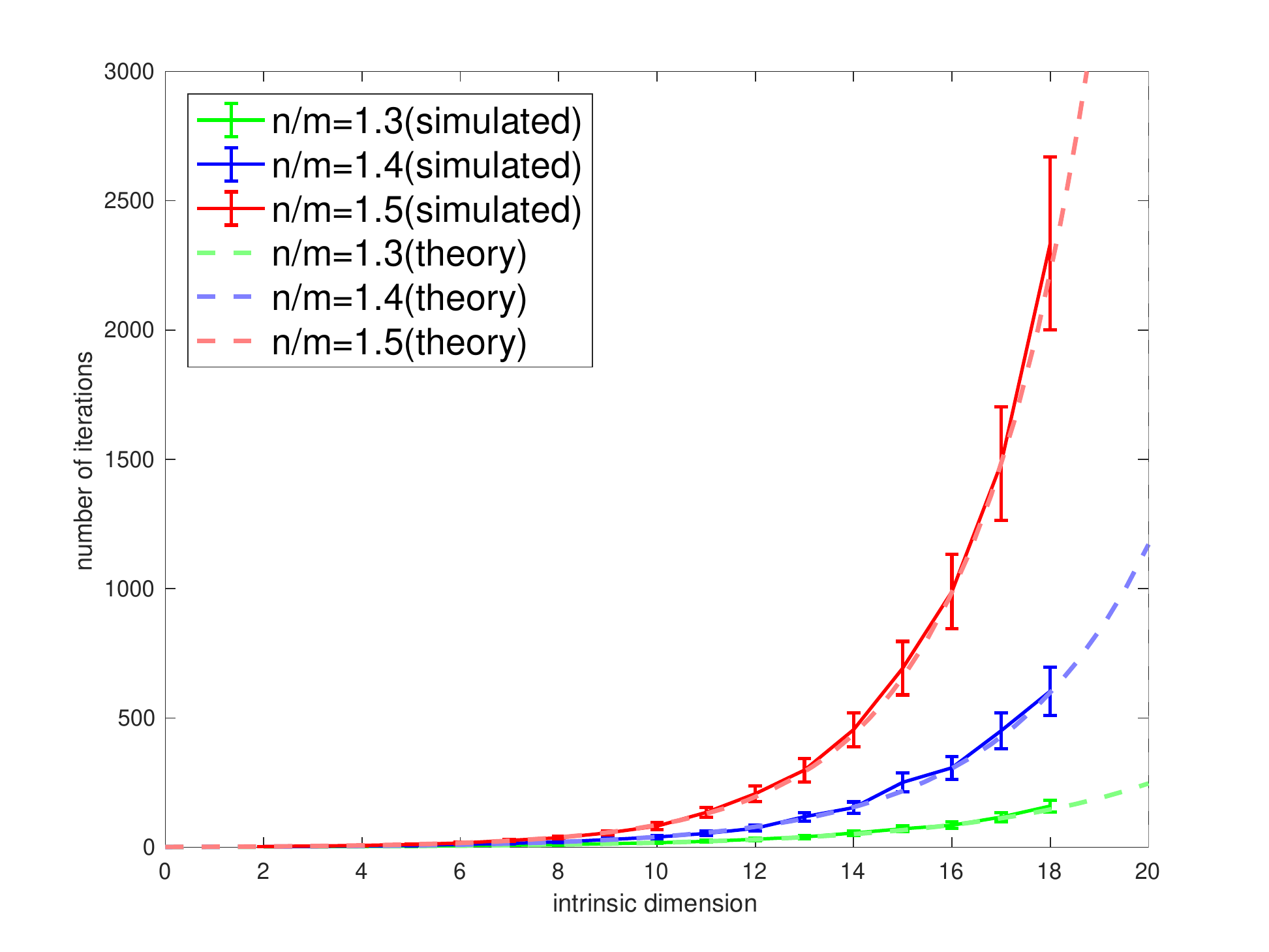}}
\caption{Average number of iterations for RANSAC (in the form of \algref{ransac-recovery}) as a function of the subspace dimension $d$ and the ratio of sample size $n$ to number of inliers $m$.  The dashed lines are the averages from our simulation while the lines are derived from theory (\prpref{ransac-recovery}).}\label{fig:RANSAC recovery complexity}
\end{figure}

\section{Subspace clustering} \label{sec:clustering}

We consider the setting of \secref{clustering-intro} and use the notation defined there.  In particular, we work under \aspref{general-clustering}.  We consider the noiseless setting for simplicity.  We also assume that all subspaces are of same dimension, denoted $d$ (so that $d_k = d$ for all $k$).

\subsection{RANSAC for subspace clustering}
We propose a simple RANSAC algorithm for subspace clustering.  As before, any of the linear subspaces is determined by any $(d+1)$-tuple that comes from that subspace.  
The algorithm starts by randomly selecting a $(d+1)$-tuple and checking if this tuple forms a linear subspace of dimension $d$.  If so, one of the subspaces is recovered and all the points on the subspace are extracted from the data.  Otherwise, the algorithm continues, repeatedly sampling a $(d+1)$-tuple at random until that condition is met.  The algorithm continues in this fashion until all the $K$ subspaces have been recovered.  
In this formulation, detailed in \algref{ransac-clustering}, both $d$ and $K$ are assumed known.

\begin{algorithm}[h]
\caption{RANSAC (Subspace Clustering)}
\label{alg:ransac-clustering}
\SetKwInOut{Input}{Input}\SetKwInOut{Output}{Output}
\Input{data points $x_1, \dots, x_n \in \bbR^p$; dimension $d$; number of subspace $K$}
\Output{$K$ linear subspaces of dimension $d$, each containing at least $d+1$ points}
\For{$k = 1, \dots, K$}{
\Repeat{the tuple is linearly dependent}{randomly select a $(d+1)$-tuple of data points}
\Return{the subspace spanned by the tuple}\\
remove the points on that subspace from the data
}
\end{algorithm}

Again, the procedure is exact by design, since we are in the noiseless setting.  Here too, researchers have not embraced RANSAC approaches because of their running time.  We confirm this folklore in the following, where we assume for simplicity that all subspaces have the same number of points $m$ (so that $m_k = m$ for all $k$).

\begin{prp} \label{prp:ransac-clustering}
\algref{ransac-clustering} is exact and the number of iterations is has the distribution of $I_1 + \dots + I_K$, where the $I$'s are independent and $I_j$ has the geometric distribution with success probability $(K-j+1) \binom{m}{d+1}/\binom{n - (j-1)m}{d+1}$.  This is stochastically bounded by the negative binomial with parameters $(K, \theta_1)$.  Thus the expected number of iterations is bounded by $K/\theta_1$, which is of order $O(n/m)^{d+1}$ when $d$ and $K$ are held fixed.
\end{prp}

The proof is very similar to that of \prpref{ransac-recovery} and is omitted.

\begin{rem}
When the dimensions of the subspaces are unknown, a strategy analogous to that described in \remref{unknown-d} is of course possible.  When the number of subspaces is unknown, a stopping rule can help decide whether there remains a subspace to be discovered.  Details are omitted as such an approach, although natural, could prove complicated.
\end{rem}


\subsection{Adapting the algorithm of Hardt and Moitra for subspace clustering}
\algref{ransac-clustering} consists in applying \algref{ransac-recovery} until a subspace is recovered, removing the points on that subspace, and then continuing, until all $K$ subspaces are recovered.  An algorithm for subspace clustering can be based on the algorithm of \cite{hardt2013algorithms} (\algref{hardt-moitra}) instead.  The resulting algorithm is suited for the case where $n - (m_1 + \dots + m_K) > p$.  Based on the fact that \algref{hardt-moitra} has expected number of iterations bounded by $2p^2 n$, the resulting algorithm for subspace clustering has expected number of iterations bounded by $2 K p^2 n$.  See \algref{hardt-moitra-clustering}, where we assume that the number of subspaces is known, but do not assume that the dimensions of the subspaces are known (and they do not need to be the same).

\begin{algorithm}[h]
\caption{Subspace Clustering based on the Hardt-Moitra Algorithm}
\label{alg:hardt-moitra-clustering}
\SetKwInOut{Input}{Input}\SetKwInOut{Output}{Output}
\Input{data points $x_1, \dots, x_n \in \bbR^p$; number of subspace $K$}
\Output{$K$ linear subspaces each with a number of points exceeding its dimension}
\For{$k = 1, \dots, K$}{
\Repeat{the tuple is linearly dependent}{randomly select a $p$-tuple of data points}
\Repeat{there are no more linearly dependent points in the tuple}{
find the smallest number of linearly dependent points in the tuple \\
\Return{the subspace spanned by these points} \\
remove the points on that subspace from the data}
}
\end{algorithm}

The reason why we extract the smallest number of linearly dependent points at each step is to avoid a situation where a $p$-tuple contains $d_j+1$ points from $L_j$ and $d_k+1$ points from $L_k$ (with $j \ne k$), in which case, assuming $d_j + d_k < p$, these points are linearly dependent but do not span one of the subspaces.  This particular step is, however, computationally challenging as it amounts to finding the sparsest solution to a $p$-by-$p$ linear system, a problem known to be challenging \citep[Eq 1]{tropp2010computational}.  One possibility is to replace this will finding the solution with minimum $\ell_1$ norm \citep[Eq 8]{tropp2010computational}.
The use of the $\ell_1$ constraint is central to the method proposed by \cite{elhamifar2009sparse}.

\subsection{The algorithm of Chen and Lerman}
The Spectral Curvature Clustering (SCC) algorithm of \cite{chen2009spectral} is in fact of RANSAC type.  The method was designed for the noisy setting and is therefore more sophisticated.\footnote{ \citet*{chen2009spectral} consider the case where the subspaces are affine, but we adapt their method to the case where they area linear.}
It is based on a function $\cA: (\bbR^p)^{d+1} \to [0,1]$ that quantifies how close a $(d+1)$-tuple is from spanning a subspace of dimension $d$ or less.  It is equal to 1 when this is the case and is strictly less than 1 when this is not the case.  
The algorithm draws a number, $c$, of $d$-tuples at random, where the $s$-th tuple is denoted $(x_{1,s}, \dots, x_{d, s})$, and computes the matrix $\bW = (W_{ij})$, where 
\[W_{ij} = \sum_{s = 1}^c \cA(x_i, x_{1,s}, \dots, x_{d, s}) \cA(x_j, x_{1,s}, \dots, x_{d, s}).\] 
It then applies a form of spectral graph partitioning algorithm to $\bW$ closely related to method of \cite{ng2002spectral}.
(The method assumes all subspaces are of same dimension $d$, and both $d$ and $K$ are assumed known.)

In the noiseless setting, one could take $\cA$ to return 1 if the tuple is linearly dependent and 0 otherwise.  In that case, $W_{ij}$ is simply the number of $d$-tuples among the $c$ that were drawn with whom both $x_i$ and $x_j$ are linearly dependent.  \cite{chen2009foundations} analyzes their method in a setting that reduces to this situation and show that the method is exact in this case.  

\subsection{Numerical experiments}
We performed some numerical experiments to compare various methods for subspace clustering, specifically, RANSAC, Sparse Subspace Clustering (SSC) \citep{elhamifar2009sparse}, Spectral Curvature Clustering (SCC) \citep{chen2009spectral}, and Thresholding-based Subspace Clustering (TSC) \citep{heckel2015theresholding}.  

Each inlier is uniformly distributed on the intersection of the unit sphere with its corresponding subspace.  
Each outlier is simply uniformly distributed on the unit sphere.  
The result of each algorithm is averaged over 500 repeats.   
Performance is measured by the Rand index.
The results are reported in \tabref{subspace}.

\newcommand{\ra}[1]{\renewcommand{\arraystretch}{#1}}

\begin{table}
\centering
\begin{tabular}{@{}cccccccccccc@{}}\toprule
 \multicolumn{1}{c}{parameters} & \multicolumn{4}{c}{average system time} &\phantom{a} & \multicolumn{4}{c}{rand index} \\
 \cmidrule{2-5} \cmidrule{7-10} 
$(d,p,K,m,m_0)$ &RANSAC & SSC  &SCC&TSC&& RANSAC & SSC  &SCC&TSC\\ \midrule
$(4,8,3,50,50)$ &    .0622 &     .3188&.3731&.1767  && 1 &         .8407&    .9997&    .7237\\
$(6,8,3,50,50)$ &.8810 &     .2813 &    .8257&.1815  && 1 &         .6490 &    .8322&    .5829\\
$(4,8,3,50,100)$ &.2108&     .3974 &.4563 &$.1757$& &  1 &       .7619 &.9689 &$.2849$ \\
$(4,8,5,50,50)$  &.3425 & .6163&    .9395&.3282&& 1 &.9206&   .9548&.7578\\
$(8,10,3,50,50)$  &   17.1827 & .3185&1.4141&    .2019  && 1 &.6148&.6904&.5688 \\

\bottomrule
\end{tabular}
\caption{Numerical experiments comparing RANSAC, SSC, SCC, and TSC for the problem of subspace clustering.  As in the text, $d$ is the dimension of the subspaces (assumed to be the same), $p$ is the ambient dimension, $K$ is the number of subspaces, $m$ is the number of inliers per subspace (assumed to be the same), $m_0$ is the number of outliers (so that $n = K m + m_0$ is the sample size).}
\label{tab:subspace}
\end{table}

\section{Discussion and conclusion} \label{sec:discussion}
In our small scale experiments, RANSAC is seen to be competitive with other methods, at least when the intrinsic dimensionality is not too large and when there are not too many outliers (or too many underlying subspaces) present in the data.  This was observed both in the context of subspace recovery and in the context of subspace clustering.

\bibliographystyle{chicago}
\bibliography{ref}

\end{document}